\numberwithin{equation}{section}
\newtheorem{theorem}{Theorem}
\numberwithin{theorem}{section}
\newtheorem{lemma}[theorem]{Lemma}
\newtheorem{proposition}[theorem]{Proposition}
\newtheorem{corollary}[theorem]{Corollary}
\newcommand{\s}{\operatorname{spt}_\omega}
\newcommand{\N}{\mathcal{N}}
\newcommand{\M}{\mathcal{M}}
\newcommand{\Z}{\mathbb{Z}}
\newcommand{\e}{\equiv}
\renewcommand{\(}{\left(}
\renewcommand{\)}{\right)}
\renewenvironment{proof}[1][Proof]{\begin{trivlist} \item[\hskip \labelsep {\bfseries #1:}]}{\qed\end{trivlist}}
\author{Min-Joo Jang}
\address{Mathematical Institute\\University of Cologne\\ Weyertal 86-90 \\ 50931 Cologne \\Germany}
\email{min-joo.jang@uni-koeln.de}
\author{Byungchan Kim}
\address{School of Liberal Arts \\ Seoul National University of Science and Technology \\ 232 Gongneung-ro, Nowon-gu, Seoul 01811, Korea}
\email{bkim4@seoultech.ac.kr}
\date{\today}
\subjclass[2010] {11P82}
\keywords{spt function, spt-crank function, asymptotic formula, partial theta function, congruences}
\thanks{Byungchan Kim was supported by the Basic Science Research Program through the National Research Foundation of Korea (NRF) funded by the Ministry of Education (NRF-2013R1A1A2061326).}
\begin{document}

\title{On spt-crank type functions}

\begin{abstract}
In a recent paper, Andrews, Dixit, and Yee introduced a new spt-type function $\s(n)$, which is closely related to Ramanujan's third order mock theta function $\omega(q)$. Garvan and Jennings-Shaffer introduced a crank function which explains congruences for $\s(n)$.  In this note, we study asymptotic behavior of this crank function and confirm a positivity conjecture of the crank asymptotically. We also study a sign pattern of the crank and congruences for $\s (n)$. 
\end{abstract}

\maketitle

\section{Introduction}

 Since Andrews \cite{GA} introduced the spt function, which counts the total number of appearance of the smallest part in each integer partition of $n$, there have been numerous studies on spt function and its variants. For example, see \cite{ack, AGL, fo, gar1, gar2, rol} to name a few.  In particular, Andrews proves striking congruences:
\begin{align*}
\operatorname{spt}( 5n+4) &\equiv 0 \pmod{5}, \\
\operatorname{spt}( 7n+5 ) &\equiv 0 \pmod{7}, \\
\operatorname{spt}( 13n + 6 ) & \equiv 0 \pmod{13}.
\end{align*}
Motivated from Dyson's rank \cite{FD} and Andrews and Garvan's crank \cite{AG} which explain Ramanujan's famous partition congruences,  Andrews, Garvan, and Liang \cite{AGL} introduced an spt-crank which explains the modulo 5 and modulo 7 congruences of the spt function.

 More recently,  Andrews, Dixit, and Yee \cite{ADY} introduced a new spt function $\s (n)$. The partition function $p_{\omega} (n)$ is defined to be the number of partitions of $n$ such that all odd parts are smaller than twice the smallest part. A new spt-type function $\s(n)$ is defined by the total number of appearances of the smallest part in each partition enumerated by $p_\omega(n)$.  Recall that Ramanujan's third order mock theta function is
$$
\omega(q):=\sum_{n=0}^{\infty}\frac{q^{2n^2+2n}}{(q;q^2)_{n+1}^2}.
$$
The subscript $\omega$ is used in $p_\omega$ because its generating function is essentially $\omega(q)$ \cite[Theorem 3.1]{ADY} :
\[
\sum_{n\ge1} p_\omega(n)q^n=\sum_{n\ge1}\frac{q^n}{\left(1-q^n\right)\left(q^{n+1};q\right)_n\left(q^{2n+2};q^2\right)_\infty}=q\omega(q).
\]
As usual, $(a)_n:=(a;q)_n := \prod_{k=1}^{n} (1-aq^{k-1})$ for $n \in \mathbb{N}_0 \cup \{ \infty \}$. In particular, Andrews, Dixit, and Yee proved the congruence
\begin{align*}
\s (5n+3) \equiv 0 \pmod{5}.
\end{align*}

 On the other hand, motivated by Andrews, Garvan and Liang \cite{AGL}, Garvan and Jennings-Shaffer \cite{GJ}  introduced many spt like functions with corresponding spt-crank-type functions. Garvan and Jennings-Shaffer first find  a proper spt-crank-type function which dissects appropriately, and later define corresponding spt-type functions which have congruences  inherited from beautiful dissections of crank functions. To introduce new spt-crank-type functions, Garvan and Jennings-Shaffer investigate Bailey pairs in Slater's list. In particular, they defined $N_{C_1} (m,n)$ by
\[
\sum_{\substack{n\geq 0 \\ m \in \mathbb{Z} }} N_{C_1} (m,n) z^m q^n =\frac{(q;q^2)_{\infty} (q)_{\infty}}{(z)_\infty(z^{-1})_{\infty}} \sum_{n=1}^{\infty} \frac{q^n (z)_n(z^{-1})_{n}}{(q;q^2)_{n} (q)_{n}}, 
\]
and showed that
\[
N_{C_1} (0,5,5n+3) = N_{C_1} (1,5,5n+3) = \cdots = N_{C_1} (4,5,5n+3),
\]
where 
\[
N_{C_1} (i, 5, n ) = \sum_{\substack{m \in \mathbb{Z} \\ m \equiv i \pmod{5}}} N_{C_1} (m,n).
\]
This clearly implies that 
\[
\operatorname{spt}_{C_1} (5n+3) \equiv 0 \pmod{5},
\]
where $\operatorname{spt}_{C_1} (n) = \sum_{m\in \mathbb{Z}} N_{C_1} (m,n)$. Actually, the generating function for $\operatorname{spt}_{C_1} (n)$ is identical with that of $\s (n)$, and thus we see that $\operatorname{spt}_{C_1} (n) = \s  (n)$ and $N_{C_1} (m,n)$ can be regarded as a crank function for $\s (n)$.

 In this paper, we investigate arithmetic properties of $\s (n)$ and its crank function $N_{C_1} (m,n)$.  The main result is an asymptotic formula for $N_{C_1} (m,n)$ which we derive by using Wright's circle method. Typically,  Wright's circle method is employed only when the generating function has just one dominant pole. In our case, due to the presence of the factor $\frac{1}{(q^2;q^2)_{\infty}}$ in the generating function (see Section 2 for details), there are two dominant poles, namely $q=\pm 1$.

\begin{theorem}\label{main1thm}
As $n \to \infty$,
\[
N_{C_1} (m,n)  \sim \frac{\log2}{4\pi \sqrt{n} } e^{\frac{\pi\sqrt{n}}{\sqrt{3}}}.
\]
\end{theorem}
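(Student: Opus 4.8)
The plan is to apply Wright's circle method to the generating function
\[
\sum_{n\ge 0}\Big(\sum_{m\in\Z}N_{C_1}(m,n)\Big)q^n
\]
— or, more precisely, to a suitable two-variable specialization. Actually, since the theorem concerns the individual coefficients $N_{C_1}(m,n)$ and the asymptotic is independent of $m$, the natural first step is to extract $N_{C_1}(m,n)$ by orthogonality of roots of unity:
\[
N_{C_1}(m,n)=\frac{1}{2\pi}\int_0^{2\pi}\Big(\sum_{k}N_{C_1}(k,n)e^{ik\theta}\Big)e^{-im\theta}\,d\theta,
\]
so I would need the asymptotics of the two-variable generating function with $z=e^{i\theta}$. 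The dominant contribution will come from a neighborhood of $\theta=0$ (where the $z$-dependent factors $(z)_\infty^{-1}(z^{-1})_\infty^{-1}$ blow up), and one expects that near $\theta=0$ the $\theta$-integral contributes a constant, leaving the stated $m$-independent main term. The bulk of the work is therefore the asymptotic analysis near $q=1$ of the one-variable object obtained by first understanding the singular behavior of
\[
G(q):=\frac{(q;q^2)_\infty(q)_\infty}{(z)_\infty(z^{-1})_\infty}\sum_{n\ge1}\frac{q^n(z)_n(z^{-1})_n}{(q;q^2)_n(q)_n}.
\]

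Next I would determine the behavior of $G(q)$ as $q\to\zeta$ for $\zeta=1$ and $\zeta=-1$, the two dominant poles coming from the $\tfrac{1}{(q^2;q^2)_\infty}$ hidden in the product (as the introduction flags). Writing $q=e^{-t}$ with $t\to0^+$ in a cone, I would use the modular transformation for $(q)_\infty$ (i.e.\ the $\eta$-function transformation, giving $(q)_\infty\sim\sqrt{2\pi/t}\,e^{-\pi^2/(6t)}$) together with $(q;q^2)_\infty=(q)_\infty/(q^2;q^2)_\infty$ to pin down the exponential rate; the combination responsible for the growth should produce the exponent $\tfrac{\pi^2}{6t}$ in total, matching $e^{\pi\sqrt{n}/\sqrt3}$ after the saddle point $t\sim\pi/(2\sqrt{3n})$. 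The appearance of $\log 2$ is the real novelty: I expect it to come from evaluating the partial-theta-type sum $\sum_{n\ge1}\frac{q^n(z)_n(z^{-1})_n}{(q;q^2)_n(q)_n}$ (or its limit as $q\to\pm1$) — at $q\to1$ this kind of sum telescopes or sums to something involving $\log 2$, and the factor $\tfrac14$ in the theorem is consistent with a $\tfrac12$ from each of the two poles $q=\pm1$ contributing equally, or with a $\tfrac12$ from the $z$-integral and a $\tfrac12$ from symmetrizing the two poles. I would make this precise by isolating, near each pole, a factor that tends to a finite nonzero constant and showing those constants sum to $\tfrac{\log 2}{(\text{elementary factors})}$.

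With the local expansions in hand, the circle method proceeds in the usual two arcs. On the \emph{major arcs} (small $|t|$, i.e.\ $q$ near $\pm1$), substitute the asymptotic for $G$ and estimate
\[
N_{C_1}(m,n)=\frac{1}{2\pi i}\oint \frac{G(q)}{q^{n+1}}\,dq
\]
by the saddle-point method: the integrand $\sim C\,e^{\pi^2/(6t)+nt}$ (times polynomial factors and the $\theta$-integral) has a saddle at $t=\pi/\sqrt{6n}$... more carefully, the exponent $\tfrac{\pi^2}{6t}+nt$ is minimized at $t_0=\tfrac{\pi}{\sqrt{6n}}$ giving value $\pi\sqrt{2n/3}$; one then checks this reproduces $\tfrac{\pi\sqrt n}{\sqrt3}$ and that the Gaussian integral over the saddle supplies the $n^{-1/2}$ and the remaining numerical constants, with the two poles $\pm1$ combining to give the $\tfrac14$ and the $\log2$. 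On the \emph{minor arcs} ($t$ bounded away from $0$ in the relevant sense, or $q$ near other roots of unity), I would show $|G(q)|$ is exponentially smaller, so that contribution is negligible; here one uses that away from $\pm1$ the factor $1/(q^2;q^2)_\infty$ does not blow up comparably, and that the finite $z$-sum is bounded.

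The main obstacle, as usual in this flavor of argument, is twofold: first, \emph{controlling the finite sum} $\sum_{n\ge1}\frac{q^n(z)_n(z^{-1})_n}{(q;q^2)_n(q)_n}$ near $q=\pm1$ — showing it converges to a computable constant (the source of $\log2$) and obtaining a uniform-enough estimate on the minor arcs; and second, \emph{handling the two dominant poles simultaneously} and verifying they contribute with the same sign so the main terms add rather than cancel — this is exactly the nonstandard feature the introduction highlights, and it is where the factor $\tfrac14$ (rather than, say, $\tfrac12$) is earned. A secondary technical point is making the $z$-integral (the extraction of a fixed residue class $m$) rigorous and confirming it contributes only a constant independent of $m$; I expect this to follow because the $z$-dependence concentrates at $z=1$ where the extra factors are, to leading order, $m$-independent.
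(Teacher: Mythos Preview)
Your overall strategy (Wright's circle method with dominant poles at $q=\pm1$) is right, but the proposal has two concrete errors and one unnecessary detour.

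\textbf{Unnecessary detour.} You propose a double integral: first in $\theta$ (to extract the $m$-th Fourier coefficient in $z=e^{i\theta}$), then in $q$. The paper avoids this entirely by first deriving an explicit one-variable generating function
\[
S_{C_1,m}(q)=\sum_{n\ge0}N_{C_1}(m,n)q^n
=\frac{1}{(q^2;q^2)_\infty}\sum_{n\ge1}(-1)^{n-1}\left(\frac{q^{n(n+1)/2+|m|n}}{1-q^n}-\frac{q^{3n^2+n+2|m|n}}{1-q^{2n}}\right),
\]
obtained from the Lambert-series expansions of the rank and crank generating functions. This reduces the problem to a single $q$-integral and makes the $m$-independence of the leading term transparent (the $|m|n$ in the exponent only affects lower-order terms).

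\textbf{Wrong attribution of the factor $\tfrac14$ and the role of $q=-1$.} You conjecture that the two poles $q=\pm1$ contribute equally, each supplying a $\tfrac12$. In fact the entire main term comes from $q=1$; near $q=-1$ the two pieces of $S_{C_1,m}$ cancel at leading order, and that arc contributes only $O(n^{-3/4}e^{\pi\sqrt{n}/\sqrt3})$. The factor $\tfrac14$ arises already near $q=1$: writing the two sums above as $-h_{1/2,B}(q)+h_{3/2,B'}(q^2)$ with $h_{A,B}(q)=\sum_{n\ge1}(-1)^n q^{An^2+Bn}/(1-q^n)$, the paper shows via a Mittag-Leffler expansion of $1/(1-q^n)$ that $h_{A,B}(q)=\frac{\log 2}{2\pi i z}+O(1)$ as $z\to0$ (the $\log 2$ is literally $\sum_{n\ge1}(-1)^{n-1}/n$ from the $1/n$ residue term). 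The \emph{difference} $h(q)-h(q^2)$ then gives $\frac{\log2}{2\pi iz}-\frac{\log2}{4\pi iz}=\frac{\log2}{4\pi iz}$, and that is where the $1/4$ is born. Near $q=-1$ one finds instead $h_{A,B}(q)\sim\frac{\log2}{4\pi i\tau}$ and $h_{3/2,B'}(q^2)\sim\frac{\log2}{4\pi i\tau}$ (since $q^2$ is again near $1$), so the difference vanishes to leading order. Your proposed mechanism would produce the wrong constant.

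\textbf{Wrong exponent in the saddle.} You write the integrand as $\sim e^{\pi^2/(6t)+nt}$ and obtain $\pi\sqrt{2n/3}$ at the saddle, then assert this ``reproduces $\pi\sqrt{n}/\sqrt3$''. It does not: $\pi\sqrt{2n/3}\neq\pi\sqrt{n/3}$. The factor in play is $\frac{1}{(q^2;q^2)_\infty}$, whose growth is $e^{\pi^2/(12t)}$ (with $q=e^{-t}$), not $e^{\pi^2/(6t)}$; the correct saddle $t_0=\pi/(2\sqrt{3n})$ then gives $\pi\sqrt{n}/\sqrt3$ as required.
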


The following corollary is an immediate result from Theorem \ref{main1thm} and confirms Garvan and Jennings-Shaffer's positivity conjecture on $N_{C_1} (m,n)$ asymptotically.

\begin{corollary}
For a fixed integer $m$, 
\[
N_{C_1} (m,n) > 0,
\]
for large enough integers $n$.
\end{corollary}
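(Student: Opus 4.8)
The plan is to read this off directly from Theorem~\ref{main1thm}; there is essentially nothing else to do. Set
$f(n):=\frac{\log 2}{4\pi\sqrt{n}}\,e^{\pi\sqrt{n}/\sqrt{3}}$, and note that $f(n)>0$ for every integer $n\ge 1$ (in fact $f(n)\to\infty$). Fix $m\in\mathbb{Z}$. By the definition of the asymptotic equivalence asserted in Theorem~\ref{main1thm}, we have $N_{C_1}(m,n)/f(n)\to 1$ as $n\to\infty$, so there is a threshold $N_0=N_0(m)$ with $N_{C_1}(m,n)/f(n)>\tfrac12$ for all $n\ge N_0$. Multiplying through by $f(n)>0$ yields $N_{C_1}(m,n)>\tfrac12 f(n)>0$ for all $n\ge N_0$, which is precisely the claim. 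So the corollary is immediate once Theorem~\ref{main1thm} is in hand.

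Two remarks on the structure of this argument. First, on uniformity: Theorem~\ref{main1thm} is stated for a \emph{fixed} $m$, with an implied constant depending on $m$, and correspondingly the threshold $N_0$ produced above depends on $m$ — which is exactly what the statement ``for a fixed integer $m$ \dots\ for large enough integers $n$'' allows. Second, one could in principle avoid invoking the full asymptotic and instead prove positivity from a cruder two-sided bound of the shape $c_1 f(n)\le N_{C_1}(m,n)\le c_2 f(n)$, or even produce an explicit $N_0(m)$ by tracking the error terms arising from the tail/minor-arc estimates in the proof of Theorem~\ref{main1thm} and solving the resulting inequality; but for the qualitative positivity statement in the corollary this is unnecessary overkill. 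Accordingly, the only ``obstacle'' here is Theorem~\ref{main1thm} itself — the Wright circle method analysis with the two dominant poles $q=\pm1$ — and once that is established the corollary follows in one line.
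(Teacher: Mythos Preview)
Your argument is correct and matches the paper's treatment: the corollary is stated as an immediate consequence of Theorem~\ref{main1thm}, with no further proof given. Your one-line deduction from the asymptotic equivalence (together with the observation that the threshold $N_0$ may depend on $m$) is exactly what is intended.
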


Bringmann and the second author \cite{BK} proved that various unimodal ranks satisfy inequalities of the form $u(m,n) > u(m+1,n)$ for large enough integers $n$. For the spt-crank $N_{C_1} (m,n)$, the situation is slightly different. 
\begin{theorem} \label{main2thm}
For a fixed nonnegative integer $m$,
\[
(-1)^{m+n+1} (N_{C_1} (m,n) - N_{C_1} (m+1, n) ) > 0,
\]
for large enough integers $n$.
\end{theorem}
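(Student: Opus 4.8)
The plan is to run Wright's circle method a second time, now on the generating function of the difference $N_{C_1}(m,n)-N_{C_1}(m+1,n)$, keeping careful track of how the answer depends on $m$. Write $F(z,q)$ for the two-variable generating function of $N_{C_1}(m,n)$ given in the introduction. Multiplying by $1-z^{-1}$ telescopes the $z$-side,
\[
\sum_{\substack{n\ge 0\\ m\in\Z}}\bigl(N_{C_1}(m,n)-N_{C_1}(m+1,n)\bigr)z^m q^n=(1-z^{-1})F(z,q),
\]
so for a fixed nonnegative integer $m$ we set
\[
G_m(q):=\sum_{n\ge 0}\bigl(N_{C_1}(m,n)-N_{C_1}(m+1,n)\bigr)q^n=\frac{1}{2\pi i}\oint_{|z|=1}\frac{(1-z^{-1})F(z,q)}{z^{m+1}}\,dz .
\]
Here we use the form of $F$ recorded in Section~2: after the common factor $(1-z)(1-z^{-1})$ of the defining numerator series is cancelled, $F$ is holomorphic at $z=1$, so $|z|=1$ is an admissible contour; the only dominant singularities in $q$ remain $q=\pm 1$, because of the factor $1/(q^2;q^2)_\infty$.

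For each of $q=1$ and $q=-1$ I would then carry out the same two-step (confluent) saddle point analysis as in the proof of Theorem \ref{main1thm}, now also carrying the factor $z^{-m}$ through the computation. After the substitution $q=\pm e^{-t}$, $t\to 0^+$, the inner $z$-integral is controlled by an exponent whose maximum over $|z|=1$ is attained at $z=1$ only when $q\to 1$, but at \emph{both} $z=1$ and $z=-1$ when $q\to-1$ (the two points give the same value $\pi^2/12$, which is the arithmetic reason for the two competing saddles). At the $z=1$ saddles one has $z^{-m}=1$, and after the rescaling $z\approx e^{-tw}$ that untangles the confluence the whole local contribution is independent of $m$ at leading order; this is exactly why Theorem \ref{main1thm} has an $m$-free main term, and the constant $\log 2$ there comes out of these $z=1$ computations. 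Hence the $z=1$ contribution over $q=1$ and the $z=1$ part of the contribution over $q=-1$ agree for $m$ and for $m+1$ to that order and cancel in $G_m$, and the asymptotics of $N_{C_1}(m,n)-N_{C_1}(m+1,n)$ is governed by the remaining saddle, $z=-1$ sitting over $q=-1$: it supplies the factor $z^{-m}\big|_{z=-1}=(-1)^m$ from the coefficient extraction together with the factor $(-1)^n$ coming from the pole at $q=-1$.

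Evaluating this last confluent saddle yields
\[
N_{C_1}(m,n)-N_{C_1}(m+1,n)=(-1)^{m+n}\,\kappa(n)\,\bigl(1+o(1)\bigr)
\]
for some explicit $\kappa(n)$ of order $n^{-c}e^{\pi\sqrt n/\sqrt 3}$ with $c>\tfrac12$, whence $(-1)^{m+n+1}\bigl(N_{C_1}(m,n)-N_{C_1}(m+1,n)\bigr)=-\kappa(n)\bigl(1+o(1)\bigr)$, and Theorem \ref{main2thm} reduces to two checks: that this $z=-1$ term really dominates the residual $m$-dependence left by the $z=1$ saddles (it does, since that dependence enters only through the next-order term of the rescaled integral), and that $\kappa(n)<0$ for all large $n$. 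I expect the main obstacle to be exactly the confluent saddle point computation: as $q\to\pm1$ the poles of $F$ at $z=q^{\pm k}$ coalesce with the saddle, so one must rescale $z$ at the scale $1-|q|$ near $\pm1$, reduce the inner integral to a fixed transcendental integral, and --- the delicate endpoint --- read off the sign and the exact power of $n$ of the $z=-1$ over $q=-1$ term. One also needs the standard circle-method bookkeeping: the minor arc estimate away from $q=\pm1$, and uniform control of the error from the non-dominant range of $z$, so that everything below the main term is absorbed into the $o(1)$.
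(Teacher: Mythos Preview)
Your route is far more elaborate than needed and leaves the decisive step undone. The paper never reintroduces the $z$-variable: equation \eqref{genSC1} already gives $S_{C_1,m}(q)$ as a one-variable $q$-series, and subtracting the cases $m$ and $m+1$ yields
\[
SD_{C_1,m}(q)=\frac{1}{(q^2;q^2)_\infty}\sum_{n\ge1}(-1)^{n-1}\bigl(q^{n(n+1)/2+mn}-q^{n(3n+1)+2mn}\bigr),
\]
a combination of partial theta functions $f_{0,a,b}$ with \emph{no} $1/(1-q^n)$ denominators left. Their behaviour near $q=\pm1$ is elementary: near $q=1$ the two thetas cancel to first order and one gets $SD_{C_1,m}(q)=-\tfrac{(1+2m)\pi\sqrt{2i}}{4}\,z^{3/2}e^{\pi i/24z}+O(y^{5/2}e^{\cdots})$; near $q=-1$, substituting $q=-Q$ into $q^{n(n+1)/2+mn}$ for odd $n$ produces the factor $(-1)^m$ directly, and $SD_{C_1,m}(q)=\tfrac{(-1)^{m+1}}{2}\sqrt{-2i\tau}\,e^{\pi i/24\tau}+O(y^{3/2}e^{\cdots})$. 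Since $\tau^{1/2}$ beats $z^{3/2}$, the arc at $q=-1$ dominates, and Wright's method gives $\mathcal{I}_3\sim\frac{(-1)^{n+m+1}}{8\sqrt3\,n}e^{\pi\sqrt n/\sqrt3}$ with the sign read off at once.

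Your double-integral plan, by contrast, hinges on the ``confluent saddle point computation'' that you yourself flag as the main obstacle and never carry out. Without it you have neither the power of $n$ nor, crucially, the sign of your $\kappa(n)$; your closing paragraph concedes both points. The claim that the residual $m$-dependence of the $z=1$ saddles is dominated by the $z=-1$ contribution (``it does, since\dots'') is precisely the comparison that requires a computation, not a parenthetical assertion --- in the paper's variables this is the $z^{3/2}$ versus $\tau^{1/2}$ comparison, which is immediate only because one has the explicit partial-theta expansions in hand. Even if your two-variable saddle picture can be made rigorous, it would be rediscovering by hard analysis what the single-variable formula \eqref{genSC1} hands you for free; as written it is a heuristic outline with the sign and the dominance of the relevant term left unverified.
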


We also investigate a congruence property  of $\operatorname{spt}_{\omega} (n)$ via the mock modularity of its generating function.

\begin{theorem}\label{main3thm}
Suppose that $p\ge5$ is an odd prime, and $j,m$ and $n$ are positive integers with $\big(\frac{n}{p}\big)=-1$. If $m$ is sufficiently large, then there are infinitely many primes $Q\e-1\pmod{576p^j}$ satisfying
\[
\s\left(\frac{Q^3p^mn+1}{12}\right)\e0\pmod{p^j}.
\]
\end{theorem}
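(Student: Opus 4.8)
\noindent\textbf{Proof idea for Theorem~\ref{main3thm}.}
I would follow the template of Ono and Treneer for congruences of coefficients of mock modular forms of half-integral weight. Two ingredients are needed, then combined by subtraction: (I) a realization of a suitable rational combination of $\s(n)$ and $p_\omega(n)$ as the coefficients of a genuine weakly holomorphic modular form of weight $\tfrac32$ on $\Gamma_0(576)$; and (II) congruences of the same shape for $p_\omega$ itself, coming from the mock modularity of $\omega(q)$.

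For (I): from $\sum_{n\ge1}p_\omega(n)q^n=q\omega(q)$ \cite{ADY} and an analysis of $N_{C_1}(m,n)$ (cf.\ \cite{GJ}) one obtains an identity of the shape $\s(n)=(\text{linear in }n)\,p_\omega(n)-\tfrac12\,N^{(2)}_{C_1}(n)$, mirroring $\operatorname{spt}(n)=np(n)-\tfrac12N_2(n)$, so that the generating function of $\s$ is a \emph{mixed mock} modular form of weight $\tfrac32$: it splits into a quasimodular part carrying an $E_2$ (essentially $q\frac{d}{dq}$ of a product of $\omega(q)$ with an eta-quotient) and a genuinely mock part whose shadow is a weight-$\tfrac12$ unary theta function. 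As in Ono's treatment of the classical $\operatorname{spt}$ function, I would form, after $q\mapsto q^{12}$,
\[
F(\tau)=\sum_{N\ge1}\bigl(\alpha\,\s(N)+(\beta N+\gamma)\,p_\omega(N)\bigr)\,q^{12N-1},
\]
with rational $\alpha,\beta,\gamma$ chosen so that the $E_2$ of the quasimodular piece cancels the $E_2$ in the non-holomorphic completion of the mock piece, and the holomorphic Eichler-integral part cancels as well. The output should be a weakly holomorphic modular form of weight $\tfrac32$ on $\Gamma_0(576)$ ($576=24^2$) with a quadratic nebentypus, $p$-integral coefficients, and $p\nmid\alpha$; the hypothesis $p\ge5$ is exactly $p\nmid576$.

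For (II) and the extraction I would invoke Treneer's theorem. For $m$ large, iterating $U_p$ pushes the principal parts of $F$ at all cusps $p$-adically to $0$, so $F|U_{p^m}$ is congruent mod $p^j$ to a cusp form $G=\sum_nb(n)q^n$ of weight $\tfrac32$ and level $576\,p^r$; since such cusp forms mod $p^j$ span a finite space, the attached mod-$p^j$ Galois data has finite image, and by Chebotarev together with results of Serre and Treneer a positive density of primes $Q\e-1\pmod{576p^j}$ satisfy $G|T_{Q^2}\e0\pmod{p^j}$. For such $Q$, expanding
\[
G|T_{Q^2}=\sum_n\bigl(b(Q^2n)+\chi^{*}(Q)\bigl(\tfrac{-n}{Q}\bigr)b(n)+\chi^{*}(Q)^2\,Q\,b(n/Q^2)\bigr)q^n\e0\pmod{p^j}
\]
and specializing to $n=Qn'$ with $Q\nmid n'$ kills the middle term ($\bigl(\tfrac{-Qn'}{Q}\bigr)=0$) and the last term ($b(n'/Q)=0$ at a non-integral index), so $b(Q^3n')\e0$, i.e.\ $a_F(p^mQ^3n')\e0\pmod{p^j}$. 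Running the same argument on the weight-$\tfrac12$ harmonic Maass form completing $q\omega(q)$ — after twisting away its unary-theta shadow modulo $p^j$ so that Treneer's theorem applies — gives $p_\omega\bigl(\tfrac{Q^3p^mn+1}{12}\bigr)\e0\pmod{p^j}$ for a positive density of such $Q$, and intersecting the two positive-density families (both being Frobenius conditions over extensions of $\mathbb{Q}(\zeta_{576p^j})$) still leaves infinitely many $Q$. Here the condition $\bigl(\tfrac np\bigr)=-1$ puts $n'$ in the square class on which the $U_{p^m}$-reduction is faithful and forces $p\nmid n'$; and since $Q\e-1\pmod{12}$, the index $p^mQ^3n'$ lies in the progression $\equiv-1\pmod{12}$ supporting $F$ (for $\tfrac{Q^3p^mn+1}{12}$ to be a positive integer one needs $p^mn\e1\pmod{12}$, which together with $\bigl(\tfrac np\bigr)=-1$ holds for infinitely many $n$), so $a_F(p^mQ^3n')=\alpha\,\s\bigl(\tfrac{Q^3p^mn+1}{12}\bigr)+(\text{linear})\,p_\omega\bigl(\tfrac{Q^3p^mn+1}{12}\bigr)$; combining $a_F\e0$ with $p_\omega\e0$ and dividing by $\alpha$ yields the theorem.

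The hard part is ingredient (I): because $\sum\s(n)q^n$ carries \emph{both} a quasimodular $E_2$-defect and a nonzero shadow, one must determine the unique linear combination and $q$-power normalization killing both at once, landing on a precise congruence subgroup with a precise nebentypus and coefficient denominators prime to $p$ — the modulus $576$ and the denominator $12$ in the conclusion are dictated by this realization. Once $F$ is pinned down (and the parallel, easier, analysis of $\omega$'s completion carried out), the rest is standard half-integral-weight Hecke bookkeeping and the Serre--Treneer density argument.
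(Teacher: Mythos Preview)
Your broad template---realize the generating function as (the holomorphic part of) a weight-$\tfrac32$ harmonic Maass form on $\Gamma_0(576)$, dispose of the quasimodular $E_2$-defect, then invoke Treneer---is exactly the mechanism the paper uses. Where you diverge is in the decomposition itself, and here your guess is off in a way that makes the argument unnecessarily heavy.

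You posit an identity of the shape $\s(n)=(\text{linear in }n)\,p_\omega(n)-\tfrac12 N^{(2)}_{C_1}(n)$ and then plan a separate congruence analysis for $p_\omega$ via the completion of $\omega(q)$, followed by an intersection of two positive-density families of primes. None of this is needed. The paper instead uses the elementary identity (from \cite[Lemma~6.1]{ADY})
\[
S_\omega(z)=\frac{q^{1/12}\bigl(1-E_2(z)\bigr)}{24\,\eta(2z)}-R_2(2z),
\]
where $R_2$ is the \emph{ordinary} second rank-moment generating function, not the $C_1$-crank moment, and the companion eta-quotient is $1/\eta(2z)$, not $q\omega(q)$. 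The point is that Bringmann \cite{KB} has already completed $R_2$ to a weight-$\tfrac32$ harmonic Maass form on $\Gamma_0(576)$ with nebentypus $\chi_{12}$ and non-holomorphic part supported on finitely many square classes; evaluating at $2z$ and rescaling gives the completion of $S_\omega$ directly. The remaining piece $E_2(24z)/\eta(24z)$ is handled in one line by the fact that $E_2$ is a $p$-adic modular form for every prime $p$, so after restricting to the progressions killing the non-holomorphic part one has a single weakly holomorphic form to which Treneer \cite{CON} applies.

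So there is no need for a parallel argument for $p_\omega$, and no intersection of Chebotarev sets; your ingredient~(II) is superfluous once the correct decomposition is in hand. The substantive correction to your ingredient~(I) is that the ``mock'' constituent is $R_2(2z)$, whose completion is already in the literature, and the ``quasimodular'' constituent is $E_2/\eta(2z)$, not anything built from $p_\omega$.
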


  The rest of paper is organized as follows. In Section 2, we derive the generating functions for $N_{C_1} (m,n)$ and its companion $N_{C_5} (m,n)$. In Section 3, we define an auxiliary function and investigate its asymptotic behavior near and away from dominant poles. These estimate will play important roles in Section 4, where we employ Wright's circle method to prove Theorem \ref{main1thm}. In Section 5, we prove Theorem \ref{main2thm}. We conclude the paper with the proof of Theorem \ref{main3thm} in Section 6.

\section*{Acknowledgements}
This paper will be a part of the first author's PhD thesis. The authors thank Kathrin Bringmann, Michael Woodbury, and the referee for valuable comments on an earlier version of this paper.

\section{Generating functions and combinatorics}
From \cite[Prop 5.1]{GJ}, we know that
\[
S_{C_1}(z,q) := \sum_{\substack{n \geq 0 \\ m \in  \mathbb{Z}}} N_{C_1} (m,n) z^m q^n = \frac{1}{(1-z)(1-z^{-1})} ( R(z,q^2) - (q;q^2)_{\infty} C(z,q) ),
\]
where $R(z,q)$ and $C(z,q)$ are the generating functions for ordinary partition ranks and cranks. From the Lambert series expansion of $C(z,q)$ and $R(z,q)$, we obtain that
\begin{align*}
S_{C_1} (z,q) =\frac{1}{(q^2 ;q^2)_{\infty}} \sum_{n=1}^{\infty} (-1)^{n-1} \left( \frac{q^{n(n+1)/2 } (1+q^{n})}{(1-zq^{n})(1-q^{n}/z)} -  \frac{q^{3n^2 + n} (1+q^{2n})}{(1-zq^{2n})(1-q^{2n}/z)}  \right).
\end{align*}
By noting that
\[
\frac{ 1+q^{n} }{(1-zq^{n})(1-q^{n}/z)} = \frac{1}{1-q^n} \left( \frac{1}{1-zq^n} + \frac{q^n /z}{1-q^n /z} \right),
\]
we deduce that
\begin{equation} \label{genSC1}
\begin{aligned} 
S_{C_1, m} (q) :&= \sum_{n \geq 0} N_{C_1} (m,n) q^n \\
&= \frac{1}{(q^2 ; q^2)_{\infty}} \sum_{n \geq 1} (-1)^{n-1} \( \frac{q^{n(n+1)/2 + |m|n}}{1-q^n} - \frac{q^{3n^2 + n +2|m|n}}{1-q^{2n}} \).
\end{aligned}
\end{equation}

 Garvan and Jennings-Shaffer \cite{GJ} also conjectured the positivity of  $N_{C_5} (m,n)$, which is a crank to  explain the congruence $\operatorname{spt}_{C_5} (5n+3) \equiv 0 \pmod{5}$. They showed that  $\operatorname{spt}_{C_5} (n) = \sum_{m \in \mathbb{Z}} N_{C_5} (m,n)  = \s (n) - \operatorname{spt}(n/2)$, where $\operatorname{spt}(n/2) =0$ for odd $n$.  As the shape of the generating function is similar to $N_{C_1} (m,n)$, we also investigate this function. From \cite{GJ}, and after some manipulations, we find that 
\begin{equation} \label{genSC2}
\begin{aligned}
S_{C_5,m} :&= \sum_{n \geq 0} N_{C_5} (m,n) q^n \\
&= \frac{1}{(q^2 ; q^2)_{\infty}} \sum_{n \geq 1} (-1)^{n-1} \( \frac{q^{n(n+1)/2 + |m|n}}{1-q^n} - \frac{q^{n^2 + n +2|m|n}}{1-q^{2n}} \).
\end{aligned}
\end{equation}

\section{An auxiliary function}

The generating functions in the previous section suggest defining
\[
h_{A,B} (q): = \sum_{n \geq 1} (-1)^n \frac{ q^{An^2 + Bn}}{1-q^n},
\]
where $2A \in \mathbb{N}$ and $2B \in \mathbb{Z}$ with $A+B$ is a positive integer. The key step to prove Theorem \ref{main1thm} is estimating its asymptotic behaviors near dominant poles, namely $q=1$ and $q=-1$, and away from them. 

We start with investigating $h_{A,B} (q)$ near $q=1$. Throughout the paper, we set $q=e^{2\pi iz}$ with $z=x+iy$. From the Mittag-Leffler partial fraction decomposition (\cite[eqn. (3.1)]{KB2} with corrected signs), for $w\in\mathbb{C}$ we find that
\begin{equation}\label{Mittag}
\frac{e^{\pi iw}}{1-e^{2\pi iw}}=\frac{1}{-2\pi iw}+\frac{1}{-2\pi i}\sum_{k\ge1}(-1)^k\(\frac{1}{w-k}+\frac{1}{w+k}\).
\end{equation}
Using this, we rewrite $h_{A,B}(q)$ as 
\begin{align*}
h_{A,B} (q) &= \sum_{n \geq 1} (-1)^n \frac{ q^{An^2 + Bn}}{1-q^n}\\
&=\sum_{n\ge1}(-1)^n q^{An^2+\left(B-\frac12\right)n}\frac{q^{\frac{n}{2}}}{1-q^n}\\
&=\sum_{n\ge1}(-1)^n q^{An^2+\left(B-\frac12\right)n}\left(\frac{1}{-2\pi inz}+\frac{1}{-2\pi i}\sum_{k\ge1}(-1)^k\left(\frac{1}{nz-k}+\frac{1}{nz+k}\right)\right)\\
&=\frac{-1}{2\pi iz}\sum_{n\ge1}(-1)^n n^{-1}q^{An^2+\left(B-\frac12\right)n}\\
&\qquad\qquad -\frac{1}{2\pi i}\sum_{n\ge1}(-1)^n q^{An^2+\left(B-\frac12\right)n}\sum_{k\ge1}(-1)^k\left(\frac{1}{nz-k}+\frac{1}{nz+k}\right).
\end{align*}
We first note that
$$
\frac{1}{nz-k}+\frac{1}{nz+k}=\frac{2nz}{n^2z^2-k^2}=2nz\(\frac{1}{n^2z^2-k^2}+\frac{1}{k^2}-\frac{1}{k^2}\),
$$
and for $|x|\le y$
$$
\sum_{k\ge1}\left|\frac{1}{n^2z^2-k^2}+\frac{1}{k^2}\right|=\sum_{k\ge1}\left|\frac{n^2z^2}{k^2\(n^2z^2-k^2\)}\right| \le\sum_{k\ge1}\frac{2n^2y^2}{k^4} \le 3n^2y^2.
$$
Therefore, it follows that
\begin{equation}\label{h(q)}
h_{A,B} (q)=  \frac{-1}{2\pi iz} f_{1,2A,2B-1}(z)-\frac{z\pi}{12i}f_{-1,2A,2B-1}(z)-\frac{z}{\pi i} \sum_{n\ge1}(-1)^n n q^{An^2+\left(B-\frac12\right)n}S_{n}(z),
\end{equation}
where $\left|S_{n}(z)\right|\le 3n^2 y^2$ and  $f_{j,a,b}(z):=\sum_{n=1}^{\infty}(-1)^n n^{-j} q^{\frac{an^2+bn}{2}}$. For nonnegative integers $j$, the asymptotic behavior of $f_{j,a,b} (z)$ is well-known (See \cite{BK, Kor}). By adopting the same technique in \cite{BK} using Zagier's asymptotic expansion \cite{DZ}, we find that $f_{-1,a,b} (z) = \frac{1}{4} + O(y)$ for $|x|\le y$ and $y\rightarrow 0^+$. Note also that 
\begin{equation}\label{Snk}
\left|\sum_{n\ge1}(-1)^n n q^{An^2+\left(B-\frac12\right)n}S_{n}(z) \right|\le 3y^2\sum_{n\ge1} n^3 e^{-2\pi y\(An^2+\(B-\frac12\)n\)} \ll 1.
\end{equation}
Since $f_{1,a,b}(z) = - \log 2 + O(y)$ for $|x|\le y$ and $y\rightarrow 0^+$, we have proven the following.

\begin{lemma}\label{hAB1lemma}
For $|x|\le y$, as $y\rightarrow 0^+$
\[
h_{A,B} (q ) = \frac{\log2}{2\pi iz}  + O( 1).
\]
\end{lemma}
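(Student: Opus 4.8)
The plan is to combine the three asymptotic inputs already assembled in \eqref{h(q)} and \eqref{Snk}. Recall that \eqref{h(q)} decomposes $h_{A,B}(q)$ into three pieces. The third piece is bounded by $O(1)$ by the estimate \eqref{Snk}, since $\frac{z}{\pi i}$ is bounded as $y\to 0^+$ on the region $|x|\le y$ and the series $\sum_{n\ge 1}(-1)^n n\, q^{An^2+(B-1/2)n}S_n(z)$ is $\ll 1$ there. The second piece is $-\frac{z\pi}{12i}f_{-1,2A,2B-1}(z)$; using the stated expansion $f_{-1,a,b}(z)=\tfrac14+O(y)$, this term is $O(|z|)=O(y)=O(1)$ as $y\to 0^+$ with $|x|\le y$, so it contributes nothing to the main term either. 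Hence the entire main term must come from the first piece.

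The heart of the argument is therefore the first piece, $\frac{-1}{2\pi i z}f_{1,2A,2B-1}(z)$. Here I would invoke the stated asymptotic $f_{1,a,b}(z)=-\log 2+O(y)$, valid for $|x|\le y$ as $y\to 0^+$, which holds uniformly in the parameters $a=2A$, $b=2B-1$ in the relevant range (the condition $A+B\in\mathbb{N}$ guarantees the exponent $\tfrac{an^2+bn}{2}=An^2+(B-\tfrac12)n$ has positive real part for $q$ near $1$, so the series converges and Zagier's expansion applies). Multiplying by $\frac{-1}{2\pi i z}$ gives
\[
\frac{-1}{2\pi i z}f_{1,2A,2B-1}(z)=\frac{-1}{2\pi i z}\bigl(-\log 2+O(y)\bigr)=\frac{\log 2}{2\pi i z}+O\!\left(\frac{y}{|z|}\right)=\frac{\log 2}{2\pi i z}+O(1),
\]
where the last step uses $|z|\ge y$ on the region $|x|\le y$, so that $y/|z|\le 1$.

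Adding the three contributions yields $h_{A,B}(q)=\frac{\log 2}{2\pi i z}+O(1)$, which is the claim. The only genuine subtlety is making sure the two cited asymptotic expansions for $f_{1,a,b}$ and $f_{-1,a,b}$ are applied legitimately: one must check that the hypotheses on $A$ and $B$ (namely $2A\in\mathbb{N}$, $2B\in\mathbb{Z}$, $A+B\in\mathbb{N}_{>0}$) place the exponents in the range where the technique of \cite{BK} via Zagier's asymptotic expansion \cite{DZ} is valid, and that the error terms are uniform enough to be absorbed. This is the step I would expect to require the most care, though it is essentially bookkeeping once the framework of \cite{BK} is in hand; the rest is the elementary combination of bounds above.
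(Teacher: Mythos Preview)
Your proposal is correct and follows essentially the same route as the paper: the paper's proof is precisely the paragraph preceding the lemma, which derives \eqref{h(q)}, bounds the third term via \eqref{Snk}, and then plugs in the asymptotics $f_{1,a,b}(z)=-\log 2+O(y)$ and $f_{-1,a,b}(z)=\tfrac14+O(y)$, exactly as you do. Your added remark about checking that the hypotheses on $A,B$ place the exponents in the range where the Zagier/\cite{BK} expansion applies is a reasonable point of care, but otherwise the arguments coincide.
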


Now we turn to investigate $h_{A,B} (z)$ near $q=-1$.  By setting $\tau:=z-\frac12=x-\frac12+iy$ and $Q:=e^{2\pi i\tau}=-q$, we derive that 
\begin{align*}
h_{A,B}(q)=h_{A,B}(-Q)&=\sum_{n\ge1}(-1)^n\frac{(-Q)^{An^2+Bn}}{1-(-Q)^{n}}=\sum_{n\ge1}\frac{(-1)^{(An+B+1)n}Q^{An^2+Bn}}{1-(-1)^nQ^n}\\
&= \sum_{n \ge 1} \frac{ (-1)^n Q^{4 An^2 + 2Bn}}{1-Q^{2n}} + (-1)^{A+B} \sum_{n \geq 1} \frac{ (-1)^n Q^{4An^2 -(4A -2B)n + A-B}}{1+Q^{2n-1}},
\end{align*}
provided $B$ is a half-integer. The first sum can be estimated using Lemma \ref{hAB1lemma}, and thus {we need only deal with the second sum.

By setting $w=(2n-1)\tau + 1/2$ in \eqref{Mittag}, we find that 
\begin{align*}
\frac{e^{\pi i (2n-1)\tau } i }{1+e^{2\pi i (2n-1)\tau}} &= \frac{1}{-2\pi i} \frac{1}{(2n-1)\tau+1/2} \\
&\qquad\qquad+ \sum_{k=1}^{\infty} \frac{(-1)^k}{-2\pi i} \( \frac{1}{(2n-1)\tau-k +1/2} + \frac{1}{(2n-1)\tau+k +1/2} \) \\
&= \sum_{k=1}^{\infty} \frac{(-1)^k}{-2\pi i} \( \frac{1}{(2n-1)\tau-k +1/2} - \frac{1}{(2n-1)\tau+k - 1/2} \) \\
&= \sum_{k=1}^{\infty} \frac{(-1)^k}{-\pi i} \( \frac{1}{(4n-2)\tau- (2k -1)} - \frac{1}{(4n-2)\tau+2k - 1} \) \\
&= \sum_{k=1}^{\infty} \frac{(-1)^k}{-\pi i}  \frac{4k-2}{(4n-2)^2\tau^2- (2k -1)^2}.
\end{align*}
Applying this, we have
\begin{multline*}
 \sum_{n \geq 1} \frac{ (-1)^n Q^{4An^2 -(4A -2B)n + A-B}}{1+Q^{2n-1}}= \sum_{n \geq 1} (-1)^n Q^{4An^2 -(4A -2B+1)n + A-B+\frac12} \cdot \frac{ Q^{n-\frac12} }{1+Q^{2n-1}}\\
=\frac{1}{\pi}\sum_{n \geq 1} (-1)^n Q^{4An^2 -(4A -2B+1)n + A-B+\frac12} \sum_{k=1}^{\infty} (-1)^k  \frac{4k-2}{(4n-2)^2\tau^2- (2k -1)^2}.
\end{multline*}
By decomposing
\[
\frac{1}{(4n-2)^2\tau^2-(2k-1)^2} = \(\frac{1}{(4n-2)^2\tau^2-(2k-1)^2}+\frac{1}{(2k-1)^2}-\frac{1}{(2k-1)^2}\)
\]
and noting that
\begin{multline*}
\left| \frac{1}{(4n-2)^2\tau^2-(2k-1)^2}+\frac{1}{(2k-1)^2} \right| \\
= \left| \frac{(4n-2)^2\tau^2}{(2k-1)^2\((4n-2)^2\tau^2-(2k-1)^2\)}\right| 
 \le  \frac{2(4n-2)^2y^2}{(2k-1)^4},
\end{multline*}
we conclude that the whole sum is bounded.

\begin{lemma}\label{hAB-1lemma} For $\left|x-\frac12\right|\le y$ and a half-integer $B$, as $y\rightarrow 0^+$, 
\[
h_{A,B} (q) = \frac{\log 2}{4 \pi i \tau} + O\( 1 \).
\]
\end{lemma}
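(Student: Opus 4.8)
The plan is to assemble the two estimates prepared in the discussion above. Recall that with $\tau=z-\tfrac12$ and $Q=-q=e^{2\pi i\tau}$ (for a half-integer $B$) we have derived
\[
h_{A,B}(q)=\sum_{n\ge1}\frac{(-1)^nQ^{4An^2+2Bn}}{1-Q^{2n}}+(-1)^{A+B}\sum_{n\ge1}\frac{(-1)^nQ^{4An^2-(4A-2B)n+A-B}}{1+Q^{2n-1}}.
\]
The first step is to recognize the first sum as $h_{2A,B}(Q^2)$, since $h_{2A,B}(Q^2)=\sum_{n\ge1}(-1)^n(Q^2)^{2An^2+Bn}/(1-(Q^2)^n)$, and then to apply Lemma~\ref{hAB1lemma} with the nome $Q^2=e^{2\pi i(2\tau)}$, i.e. with $2\tau$ in the role of $z$. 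Writing $\tau=(x-\tfrac12)+iy$, the hypothesis $|x-\tfrac12|\le y$ is exactly $|\operatorname{Re}(2\tau)|\le\operatorname{Im}(2\tau)$, and $\operatorname{Im}(2\tau)=2y\to0^+$, so Lemma~\ref{hAB1lemma} gives
\[
\sum_{n\ge1}\frac{(-1)^nQ^{4An^2+2Bn}}{1-Q^{2n}}=h_{2A,B}(Q^2)=\frac{\log2}{2\pi i(2\tau)}+O(1)=\frac{\log2}{4\pi i\tau}+O(1).
\]

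The second step is to show the remaining $(1+Q^{2n-1})$-sum is $O(1)$; the discussion above has already reduced it to
\begin{multline*}
\sum_{n\ge1}\frac{(-1)^nQ^{4An^2-(4A-2B)n+A-B}}{1+Q^{2n-1}}\\
=\frac{1}{\pi}\sum_{n\ge1}(-1)^nQ^{4An^2-(4A-2B+1)n+A-B+\frac12}\sum_{k\ge1}(-1)^k\frac{4k-2}{(4n-2)^2\tau^2-(2k-1)^2},
\end{multline*}
together with the splitting of $((4n-2)^2\tau^2-(2k-1)^2)^{-1}$ as $\bigl(((4n-2)^2\tau^2-(2k-1)^2)^{-1}+(2k-1)^{-2}\bigr)-(2k-1)^{-2}$. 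This splitting is forced: the double series is only conditionally convergent in $k$ (the summands decay like $1/k$), so one may not interchange summations beforehand. For the $-(2k-1)^{-2}$ part, the $k$-sum collapses to the fixed convergent value $\sum_{k\ge1}(-1)^k(4k-2)/(2k-1)^2=2\sum_{k\ge1}(-1)^k/(2k-1)$, leaving a constant multiple of $\sum_{n\ge1}(-1)^nQ^{4An^2-(4A-2B+1)n+A-B+\frac12}$; pulling out the bounded factor $Q^{A-B+\frac12}$, this is a partial theta function of the type $f_{0,a,b}(\tau)$ (explicitly $a=8A$, $b=-8A+4B-2$), hence $O(1)$ as $y\to0^+$ by the known asymptotics of $f_{0,a,b}$. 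For the other part, the bound $|((4n-2)^2\tau^2-(2k-1)^2)^{-1}+(2k-1)^{-2}|\le2(4n-2)^2y^2/(2k-1)^4$ obtained above (using $|\tau|^2\le2y^2$) makes the $k$-sum $O(n^2y^2)$, so the $n$-sum is $O(y^2)\sum_{n\ge1}n^2e^{-2\pi y(4An^2+\cdots)}=O(y^2)\cdot O(y^{-3/2})=O(y^{1/2})=o(1)$.

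Combining the two steps, and using that $(-1)^{A+B}\in\{\pm1\}$ is a constant since $A+B\in\mathbb{Z}$, yields $h_{A,B}(q)=\frac{\log2}{4\pi i\tau}+O(1)$, which is the assertion.

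The bulk of the work, and the only genuinely delicate part, is the $O(1)$ bound for the $(1+Q^{2n-1})$-sum: one must do the Mittag--Leffler splitting \emph{before} interchanging the $n$- and $k$-summations (absolute convergence fails), identify the leftover $n$-series as a partial theta and invoke its boundedness near $q=\pm1$ rather than a naive geometric bound (which would only give $O(y^{-1/2})$), and use that the Gaussian weight $|Q|^{4An^2}$, available because $A\ge\tfrac12>0$, dominates the polynomial factors $4k-2$ and $n^2$. One minor point deserves a sentence: in applying Lemma~\ref{hAB1lemma} to $h_{2A,B}$ the quantity $2A+B$ is only a half-integer, not a positive integer as in the standing hypothesis; but the proof of Lemma~\ref{hAB1lemma} uses only the Gaussian shape of the exponent together with Zagier's asymptotic expansion, so it applies without change.
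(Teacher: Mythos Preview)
Your proof is correct and follows essentially the same route as the paper: the same even/odd decomposition of $h_{A,B}(-Q)$, Lemma~\ref{hAB1lemma} applied to the first sum with nome $Q^2$, and the Mittag--Leffler splitting of $(1+Q^{2n-1})^{-1}$ for the second. You supply a few details the paper leaves implicit---identifying the residual $n$-series after the $-(2k-1)^{-2}$ extraction as a bounded partial theta $f_{0,8A,-8A+4B-2}(\tau)$, and the $O(y^{1/2})$ bound for the remainder---and you correctly flag (and dismiss) the technicality that $2A+B$ is only a half-integer when invoking Lemma~\ref{hAB1lemma}; the paper's own appeal to that lemma glosses over the same point.
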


Since the term $\frac{1}{(q^2;q^2)_\infty}$ in \eqref{genSC2} is exponentially small away from the dominant poles, we do not need a sharp bound for $h_{A,B} (z)$ in this region.

\begin{lemma}\label{haway}
For $y>0$ with $y\le |x| \le 1/2-y$, 
\[
\left| h_{A,B}(q)\right| \ll y^{-3/2}.
\]
\end{lemma}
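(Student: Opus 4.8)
The plan is to bound $h_{A,B}(q)=\sum_{n\ge1}(-1)^n q^{An^2+Bn}/(1-q^n)$ crudely in the region $y\le|x|\le1/2-y$, where $q=e^{2\pi iz}$, $z=x+iy$. Since the denominators $1-q^n$ no longer degenerate to leading order (we are bounded away from both $q=1$ and $q=-1$), the only real danger comes from those $n$ for which $q^n$ happens to lie close to $1$; these occur when $nx$ is close to an integer. First I would split $h_{A,B}$ according to the size of $|1-q^n|$. For the ``good'' indices, where $|1-q^n|\gg y$ say, one has $|q^{An^2+Bn}/(1-q^n)|\ll y^{-1}e^{-2\pi y(An^2+Bn)}$, and summing the Gaussian $\sum_{n\ge1} e^{-2\pi y(An^2+Bn)}\ll y^{-1/2}$ gives a contribution $\ll y^{-3/2}$, which is exactly the claimed bound.

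The main work is the ``bad'' indices, where $q^n$ is within $O(y)$ of $1$, i.e. $\|nx\|$ is small (here $\|\cdot\|$ is distance to the nearest integer). For such $n$ the denominator can be as small as $O(ny)$ (since $|1-q^n|\gg \min(ny,\; \|nx\|)$ roughly), but there are comparatively few of them: because $y\le|x|\le1/2-y$, the spacing between consecutive bad indices is bounded below, so in a dyadic block $N\le n<2N$ the number of bad $n$ is $O(1+Ny)$ or so, and for each such $n$ one has $|q^{An^2+Bn}/(1-q^n)|\ll (ny)^{-1}e^{-2\pi y A n^2}$. Rather than tracking this block by block, the cleanest route is the uniform estimate $|1-q^n|\ge |1-|q|^n| = 1-e^{-2\pi y n}\gg \min(1,ny)\gg ny/(1+ny)$; hence $|q^{An^2+Bn}/(1-q^n)|\ll (1+1/(ny))e^{-2\pi y(An^2+Bn)}$. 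Splitting the resulting sum as $\sum_{n\ge1}e^{-2\pi y(An^2+Bn)}+\frac1y\sum_{n\ge1}\frac1n e^{-2\pi y(An^2+Bn)}$, the first piece is $\ll y^{-1/2}$ and the second is $\ll y^{-1}\cdot y^{-1/2}=y^{-3/2}$ (comparing $\frac1n e^{-cyn^2}$ to an integral, or just bounding $\sum_{n\le y^{-1/2}}\frac1n\ll\log(1/y)$ and the tail $\sum_{n>y^{-1/2}}\frac1n e^{-cyn^2}\ll1$, which even gives the stronger $y^{-1}\log(1/y)$). Either way the total is $\ll y^{-3/2}$.

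The step I expect to be most delicate is making the lower bound for $|1-q^n|$ genuinely uniform in $x$ over the whole range $y\le|x|\le1/2-y$: one must check that the contributions cannot conspire when $x$ is a rational with small denominator (so that $q^n$ returns near $1$ periodically in $n$). Using the real-part bound $|1-q^n|\ge 1-e^{-2\pi ny}$ sidesteps this entirely, since it is independent of $x$, at the modest cost of not exploiting cancellation in the alternating sign $(-1)^n$ — but as the lemma only asks for $\ll y^{-3/2}$ and the factor $1/(q^2;q^2)_\infty$ in \eqref{genSC1} and \eqref{genSC2} is exponentially small here, this loss is harmless. Finally I would double-check the comparison $\sum_{n\ge1}e^{-2\pi y(An^2+Bn)}\ll y^{-1/2}$ in the regime where $B$ may be negative (recall only $A+B>0$ and $2A\in\mathbb N$ are assumed): completing the square, $An^2+Bn\ge A(n-\tfrac{|B|}{2A})^2 - \tfrac{B^2}{4A}$, so the sum is $\ll e^{cB^2 y}\sum_m e^{-2\pi yAm^2}\ll y^{-1/2}$ since $A,B$ are fixed and $y\to0^+$, which completes the argument.
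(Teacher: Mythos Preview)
Your argument is correct, but it is considerably more elaborate than needed; the paper dispatches the lemma in one line. Since $|1-q^n|\ge 1-|q|^n\ge 1-|q|\gg y$ uniformly in $n$, one has immediately
\[
|h_{A,B}(q)|\le \frac{1}{1-|q|}\sum_{n\ge1}|q|^{An^2+Bn}\ll \frac{1}{y}\cdot y^{-1/2}=y^{-3/2}.
\]
Your initial discussion of ``good'' versus ``bad'' indices (where $q^n$ lands near $1$) is never actually used: you abandon it in favour of the real-part bound $|1-q^n|\ge 1-e^{-2\pi ny}$, and the further step down to $1-e^{-2\pi y}$ is all the paper needs. The finer lower bound $|1-q^n|\gg ny/(1+ny)$ you employ does yield the sharper estimate $|h_{A,B}(q)|\ll y^{-1}\log(1/y)$, but since only $y^{-3/2}$ is required and the prefactor $1/(q^2;q^2)_\infty$ is exponentially small on this arc, the refinement buys nothing. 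Note also that neither argument uses the hypothesis $y\le|x|\le 1/2-y$; the bound holds for all $x$. Your closing check on possibly negative $B$ via completing the square is a sound precaution.
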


\begin{proof}
$$
\left| h_{A,B}(q)\right| \le\frac{1}{1-|q|}\sum_{n\ge1}|q|^{An^2+Bn} \ll \frac{1}{y} y^{-1/2}.
$$
\end{proof}

\section{Proof of Theorem \ref{main1thm}}
In this section, we use Wright's Circle Method to complete the proof of Theorem \ref{main1thm}. Before beginning the proof, we first investigate $\frac{1}{(q^2;q^2)_{\infty}}$ near and away from $q=\pm 1$. Recall that, from the modular inversion formula for Dedekind's eta-function (\cite[P.121, Proposition 14]{Kob}), 
\begin{equation}\label{qasym}
(q;q)_\infty=\frac{1}{\sqrt{-iz}}e^{-\frac{\pi iz}{12}-\frac{\pi i}{12z}}\(1+O\(e^{-\frac{2\pi i}{z}}\)\).
\end{equation}
Therefore, we find that
\begin{equation}\label{prodesti}
\begin{aligned}
\frac{1}{(q^2;q^2)_\infty}&=\sqrt{-2iz}\ e^{\frac{\pi i}{24z}} +O\(y^{3/2}e^{\frac{\pi}{24} \text{Im} \( \frac{-1}{z} \) }  \), \quad \text{ for $|x| < y,$ } \\
\frac{1}{(Q^2;Q^2)_\infty}&=\sqrt{-2i\tau}\ e^{\frac{\pi i}{24\tau}} +O\(y^{3/2}e^{\frac{\pi}{24} \text{Im} \( \frac{-1}{\tau} \) }  \),\quad \text{ for $|x-1/2|<y$.}
\end{aligned}
\end{equation}

Now we consider the behavior away from the dominant poles, i.e., in a range of $y\le |x| \le 1/2-y$. Note that
$$
\log\(\frac{1}{(q^2;q^2)_\infty}\)=-\sum_{n=1}^\infty\log\(1-q^{2n}\)=\sum_{n=1}^\infty\sum_{m=1}^\infty\frac{q^{2nm}}{m}=\sum_{m=1}^\infty\frac{q^{2m}}{m\(1-q^{2m}\)}.
$$
Thus,
\begin{align}
\left|\log\(\frac{1}{(q^2;q^2)_\infty}\)\right|\le \sum_{m=1}^\infty\frac{|q|^{2m}}{m\left|1-q^{2m}\right|}&\le \sum_{m=1}^\infty\frac{|q|^{2m}}{m\(1-|q|^{2m}\)}+\frac{|q|^2}{\left|1-q^2\right|}-\frac{|q|^2}{1-|q|^2}\nonumber\\
&=\log\(\frac{1}{(|q|^2;|q|^2)_\infty}\)-|q|^2\(\frac{1}{1-|q|^2}-\frac{1}{\left|1-q^2\right|}\).\label{second}
\end{align}
Plugging $z\mapsto 2iy$ into \eqref{qasym}, we find that
\[
\log\(\frac{1}{(|q|^2;|q|^2)_\infty}\)= \frac{\pi}{24y}+\frac12\log(2y)+O(y).
\]
To estimate the other term in \eqref{second}, first note that if $y\le |x| \le \frac14$, then $\cos(4\pi y)\ge\cos(4\pi x)$. On the other hand, if $\frac14\le |x| \le \frac12-y$, then $\cos(4\pi x)\le\cos(2\pi -4\pi y)=\cos(4\pi y)$. Thus, for all $y\le |x| \le 1/2-y$, $\cos(4\pi x)\le\cos(4\pi y)$. Therefore,
\[
\left|1-q^2\right|^2=1-2e^{-4\pi y}\cos(4\pi x)+e^{-8\pi y}\ge 1-2e^{-4\pi y}\cos(4\pi y)+e^{-8\pi y}.
\]
From the Taylor expansion, we conclude that $\left|1-q^2\right|\le4\sqrt{2}\pi y+O\(y^2\)$. Since $1-|q|^2=1-e^{-4\pi y}=4\pi y+O\(y^2\)$, we arrive at
\begin{equation}\label{prodaway}
\left|\frac{1}{(q^2;q^2)_\infty}\right|\ll \sqrt{2y}\exp\left[ \frac{1}{y}\(\frac{\pi}{24}-\frac{1}{4\pi}\(1-\frac{1}{\sqrt{2}}\)\) \right].
\end{equation}

Now we are ready to prove Theorem \ref{main1thm}. 

\begin{proof}[Proof of Theorem \ref{main1thm}]
First, rewrite \eqref{genSC1} in terms of $h_{A,B}(q)$ as follows:
\begin{equation}\label{SC1-h}
S_{C_1, m} (q) = \sum_{n \geq 0} N_{C_1} (m,n) q^n = \frac{-1}{(q^2 ; q^2)_{\infty}} \(h_{\frac12,\frac{1+2|m|}{2}}(q)- h_{\frac32,\frac{1+|m|}{2}}\(q^2\)\).
\end{equation}
By Cauchy's Theorem, we see for $y=\frac{1}{4\sqrt{3n}}$ that
\begin{align*}
N_{C_1} (m,n)&=\frac{1}{2\pi i}\int_\mathcal{C}\frac{S_{C_1, m} (q)}{q^{n+1}}dq=\int_{-\frac12}^{\frac12} S_{C_1,m}\(e^{2\pi ix-\frac{\pi}{2\sqrt{3n}}}\)e^{-2\pi inx+\frac{\pi\sqrt{n}}{2\sqrt{3}}} dx\\
&=\int_{|x|\le y}S_{C_1, m} \(e^{2\pi ix-\frac{\pi}{2\sqrt{3n}}}\)e^{-2\pi inx+\frac{\pi\sqrt{n}}{2\sqrt{3}}}  dx \\
&\qquad+\int_{y\le|x|\le\frac12-y}S_{C_1,m}\(e^{2\pi ix-\frac{\pi}{2\sqrt{3n}}}\)e^{-2\pi inx+\frac{\pi\sqrt{n}}{2\sqrt{3}}}dx\\
&\qquad\qquad+\int_{\left|x-\frac12\right|\le y}S_{C_1,m}\(e^{2\pi ix-\frac{\pi}{2\sqrt{3n}}}\)e^{-2\pi inx+\frac{\pi\sqrt{n}}{2\sqrt{3}}}dx\\
&=:\mathcal{I}_1+\mathcal{I}_2+\mathcal{I}_3,
\end{align*}
where $\mathcal{C}=\{|q|=e^{-\frac{\pi}{2\sqrt{3n}}}\}$. In this case, the integral $\mathcal{I}_1$ contributes the main term and the integrals $\mathcal{I}_2$ and $\mathcal{I}_3$ are absorbed in the error term.

To evaluate $\mathcal{I}_1$ we first introduce a function $P_s(u)$ which is defined by Wright \cite{Wright}. For fixed $M>0$ and $u\in\mathbb{R}^+$
$$
P_{s}(u):=\frac{1}{2\pi i} \int_{1-Mi}^{1+Mi} v^s e^{u\(v+\frac{1}{v}\)} dv
$$
This functions is rewritten in terms of the $I$-Bessel function up to an error term. 
\begin{lemma}[\cite{Wright}] As $n\rightarrow \infty$
$$
P_s(u)=I_{-s-1}(2u)+O\(e^u\),
$$
where $I_\ell$ denotes the usual the $I$-Bessel function of order $\ell$.
\end{lemma}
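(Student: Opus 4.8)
The plan is to recognize $P_s(u)$ as a truncation of the classical Hankel loop integral for the $I$-Bessel function and to control the discarded pieces by a steepest-descent estimate. Recall the Hankel representation
\[
I_\nu(z)=\frac{1}{2\pi i}\int_{\mathcal H}t^{-\nu-1}e^{\frac z2\left(t+t^{-1}\right)}\,dt,
\]
where $\mathcal H$ comes in from $-\infty$ below the negative real axis, encircles the origin once counterclockwise, and returns to $-\infty$ above it. Taking $z=2u$ and $\nu=-s-1$ (so that $t^{-\nu-1}=t^{s}$) turns this into
\[
I_{-s-1}(2u)=\frac{1}{2\pi i}\int_{\mathcal H}v^{s}e^{u\left(v+v^{-1}\right)}\,dv,
\]
which has exactly the integrand of $P_s(u)$, only over a different contour. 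Since $v\mapsto v^{s}e^{u(v+v^{-1})}$ is holomorphic on $\mathbb C\setminus(-\infty,0]$, it is enough to deform $\mathcal H$ — without crossing the cut or the point $0$ — into the union of the vertical segment $L$ from $1-iM$ to $1+iM$ together with two connecting arcs $\mathcal A_{+},\mathcal A_{-}$ running from $1+iM$ and $1-iM$ out to $-\infty$, and then to show $\int_{\mathcal A_{\pm}}v^{s}e^{u(v+v^{-1})}\,dv=O(e^{u})$. Granting this, $P_s(u)=\frac{1}{2\pi i}\int_L=I_{-s-1}(2u)-\frac{1}{2\pi i}\int_{\mathcal A_{+}\cup\mathcal A_{-}}=I_{-s-1}(2u)+O(e^{u})$.

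To set up the estimate, put $\phi(v):=v+v^{-1}$; its only saddle in $\mathrm{Re}\,v>0$ is $v=1$, where $\phi(1)=2$ and $\phi''(1)=2>0$. Writing $v=1+it$ one gets $\mathrm{Re}\,\phi(1+it)=1+(1+t^2)^{-1}=2-t^2+O(t^4)$, so $L$ is locally a path of steepest descent through the saddle and $\frac{1}{2\pi i}\int_L$ already reproduces, by Laplace's method, the full asymptotic expansion of $I_{-s-1}(2u)\sim e^{2u}(4\pi u)^{-1/2}$. For the connecting arcs one should leave $1\pm iM$ into the region $\mathrm{Re}\,v<1$ and keep $\mathrm{Re}\,\phi$ from climbing back near its saddle value: for instance along the horizontal ray $v=x+iM$ with $x\le 1$ one has $\mathrm{Re}\,\phi(x+iM)=x+x(x^2+M^2)^{-1}$, which decreases from its junction value as $x$ decreases and tends to $-\infty$ as $x\to-\infty$. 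Splitting $\mathcal A_{+}$ into its bounded part and its tail, and using that $|v^{s}|$ grows only polynomially, the bounded part contributes $O(e^{u})$ while the tail contributes $\int_{-\infty}^{0}(\text{polynomial})\,e^{ux}\,dx=O(1)$; the arc $\mathcal A_{-}$ is symmetric.

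The main obstacle is exactly this last estimate: one must route the connecting arcs so that $\mathrm{Re}\,\phi$ drops below the saddle value $\phi(1)=2$ immediately and stays under control all the way out to $-\infty$ — in particular in a neighborhood of the junctions $1\pm iM$ — and then weigh the pointwise exponential bound against the polynomial growth of $v^{s}$ and the lengths of the arcs. The remaining ingredients — the Hankel representation, the identification of the parameters, and the cut-avoiding deformation — are routine, as are the classical $I$-Bessel asymptotics that are used afterwards.
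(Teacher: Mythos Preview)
The paper does not supply a proof of this lemma; it is quoted from Wright and used as a black box. So there is no in-paper argument to compare against. Your outline is exactly the classical one: recognize $P_s$ as the Schl\"afli loop integral for $I_{-s-1}(2u)$ restricted to the vertical segment through the saddle $v=1$, then bound the discarded arcs by steepest descent.

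One quantitative point deserves tightening. At the junction $v=1\pm iM$ you have
\[
\mathrm{Re}\,\phi(1\pm iM)=1+\frac{1}{1+M^2}>1,
\]
and this value persists on any short initial piece of your connecting arcs $\mathcal A_\pm$. Hence the ``bounded part'' of $\mathcal A_\pm$ contributes $O\!\bigl(e^{u(1+(1+M^2)^{-1})}\bigr)$, not $O(e^{u})$ as you wrote; the literal bound $O(e^{u})$ cannot be squeezed out of this contour for fixed finite $M$, since the integrand already has size $e^{u(1+(1+M^2)^{-1})}$ at the endpoints of $L$. What your argument actually yields is
\[
P_s(u)=I_{-s-1}(2u)+O\!\bigl(e^{c u}\bigr),\qquad c=1+\frac{1}{1+M^2}<2,
\]
which is entirely sufficient: the main term is $\sim e^{2u}(4\pi u)^{-1/2}$, and in the paper's application $M=1$, so $c=3/2$ and the error is still exponentially negligible. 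Either relax the stated error to $O(e^{cu})$ for some $c<2$, or note explicitly that only this weaker form is needed downstream.
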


From Lemma \ref{hAB1lemma} and \eqref{prodesti}, we find that  for $|x|\le \frac{1}{4\sqrt{3n}}=y$ as $n\rightarrow \infty$
$$
S_{C_1, m} (q) =  \frac{e^{\frac{\pi i}{24z}}\log2}{2\pi\sqrt{-2iz}}+O\(n^{-1/4}e^{\frac{\pi}{24}\text{Im}\(-\frac{1}{z}\)}\).
$$
Thus the integral $\mathcal{I}_1$ becomes
$$
\int_{|x|\le \frac{1}{4\sqrt{3n}}}  \(\frac{e^{\frac{\pi i}{24z}}\log2}{2\pi\sqrt{-2iz}}+O\(n^{-1/4}e^{\frac{\pi}{24}\text{Im}\(-\frac{1}{z}\)}\) \)  e^{-2\pi inx+\frac{\pi\sqrt{n}}{2\sqrt{3}}}  dx.
$$
By making the change of variables $v= 1- i 4\sqrt{3n}x$, we arrive at
\begin{align*}
\int_{1-i}^{1+i} &\frac{1}{i4\sqrt{3n}}\(\frac{e^{\frac{\pi\sqrt{n}}{2\sqrt{3}v}}(3n)^{1/4} \log2}{\pi\sqrt{2v}}+O\(n^{-1/4}e^{\frac{\pi\sqrt{n}v}{2\sqrt{3}}} \)\)e^{\frac{\pi \sqrt{n} v}{2\sqrt{3}}} dv\\
&=\frac{(3n)^{-1/4}\log2}{2\sqrt{2}}P_{-\frac12}\(\frac{\pi\sqrt{n}}{2\sqrt{3}}\)+O\(n^{-3/4}e^{\frac{\pi\sqrt{n}}{\sqrt{3}}}\)\\
&=\frac{(3n)^{-1/4}\log2}{2\sqrt{2}}I_{-\frac32}\(\frac{\pi\sqrt{n}}{\sqrt{3}}\)+O\(n^{-3/4}e^{\frac{\pi\sqrt{n}}{\sqrt{3}}}\)\\
&=\frac{\log2}{4\pi}n^{-1/2}e^{\frac{\pi\sqrt{n}}{\sqrt{3}}}+O\(n^{-3/4}e^{\frac{\pi\sqrt{n}}{\sqrt{3}}}\),
\end{align*}
where we use the asymptotic formula for the $I$-Bessel function \cite[4.12.7]{AAR}
\[
I_\ell(x)=\frac{e^x}{\sqrt{2\pi x}}+O\(\frac{e^x}{x^{\frac32}}\).
\]

Now we consider the integral $\mathcal{I}_2$. From the Lemma \ref{haway} and \eqref{prodaway}, for $\frac{1}{4\sqrt{3n}}\le|x|\le\frac12-\frac{1}{4\sqrt{3n}}$ we have
$$
S_{C_1, m} (q)\ll n^{1/2}\text{exp}\(\frac{\pi\sqrt{n}}{2\sqrt{3}}-\frac{\sqrt{3n}}{\pi}\(1-\frac{1}{\sqrt{2}}\) \),
$$
 as $n\rightarrow \infty$. Hence, we have
$$
\mathcal{I}_2\ll n^{1/2}\text{exp}\(\frac{\pi\sqrt{n}}{\sqrt{3}}-\frac{\sqrt{3n}}{\pi}\(1-\frac{1}{\sqrt{2}}\) \).
$$

Finally, to estimate $\mathcal{I}_3$ first we shift $x\mapsto \widetilde{x}+\frac12$ (thus $\tau=\widetilde{x}+i\frac{\pi}{4\sqrt{3n}}$), and rewrite $\mathcal{I}_3$ as follows:
$$
\mathcal{I}_3=\int_{\left|x-\frac12\right|\le y}S_{C_1, m} \(e^{2\pi iz}\)e^{-2\pi inx+\frac{\pi\sqrt{n}}{2\sqrt{3}}} dx
=(-1)^n\int_{|\widetilde{x}|\le y}S_{C_1, m} \(e^{2\pi i\tau}\)e^{-2\pi in\widetilde{x}+\frac{\pi\sqrt{n}}{2\sqrt{3}}} d\widetilde{x}
$$
As before, from Lemma \ref{hAB-1lemma} and \eqref{prodesti}, we find that  for $\left|x-\frac12\right|=|\widetilde{x}|\le \frac{1}{4\sqrt{3n}}$  as $n\rightarrow \infty$
$$
S_{C_1, m} \(e^{2\pi i\tau}\)= O\(y^{1/2} e^{\frac{\pi}{24} \text{Im} \( \frac{-1}{\tau} \)} \).
$$
Thus, we arrive at
$$
\mathcal{I}_3\ll \int_{|\widetilde{x}|\le\frac{1}{4\sqrt{3n}}} n^{-1/4} e^{\frac{\pi\sqrt{n}}{\sqrt{3}}}  d\widetilde{x} \ll n^{-3/4}e^{\frac{\pi\sqrt{n}}{\sqrt{3}}},
$$
which finishes the proof of Theorem \ref{main1thm}.
\end{proof}

By noting that 
\[
S_{C_5} (q) = \sum_{n \geq 0} N_{C_5} (m,n) q^n = \frac{-1}{(q^2;q^2)_{\infty}} \( h_{\frac12,\frac{1+2|m|}{2}}(q) - h_{\frac12,\frac{1+2|m|}{2}}(q^2)\),
\]
we can deduce the following asymptotic formula by proceeding in the same way as before. 

\begin{proposition}
As $n \to \infty$,
\[
N_{C_5} (m,n)  \sim \frac{\log2}{4\pi \sqrt{n} } e^{\frac{\pi\sqrt{n}}{\sqrt{3}}}.
\]
\end{proposition}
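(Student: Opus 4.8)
The plan is to run the proof of Theorem~\ref{main1thm} essentially verbatim. The generating function at hand, recorded just above, is
\[
S_{C_5,m}(q)=\frac{-1}{(q^2;q^2)_\infty}\left(h_{\frac12,\frac{1+2|m|}{2}}(q)-h_{\frac12,\frac{1+2|m|}{2}}(q^2)\right),
\]
which has the same overall shape as \eqref{SC1-h}: a difference of two copies of $h$ divided by $(q^2;q^2)_\infty$, with the very same leading term $h_{\frac12,\frac{1+2|m|}{2}}(q)$ and only the subtracted $h(q^2)$-term changed. So the first thing I would do is check that all three regional estimates used for $S_{C_1,m}$ survive this change. The point is that Lemma~\ref{hAB1lemma} is independent of $A$ and $B$: for $|x|\le y$ with $y=\frac{1}{4\sqrt{3n}}$, applying it with $z\mapsto 2z$ (legitimate because $|x|\le y$ forces $|2x|\le 2y$) gives $h_{A,B}(q^2)=\frac{\log2}{4\pi iz}+O(1)$ irrespective of $A,B$. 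Hence near $q=1$,
\[
h_{\frac12,\frac{1+2|m|}{2}}(q)-h_{\frac12,\frac{1+2|m|}{2}}(q^2)=\frac{\log2}{2\pi iz}-\frac{\log2}{4\pi iz}+O(1)=\frac{\log2}{4\pi iz}+O(1),
\]
exactly the leading term obtained in the $C_1$ case, and combining with \eqref{prodesti} yields the same local expansion $S_{C_5,m}(q)=\frac{e^{\pi i/(24z)}\log2}{2\pi\sqrt{-2iz}}+O\!\left(n^{-1/4}e^{\frac{\pi}{24}\mathrm{Im}(-1/z)}\right)$ as before.

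Near $q=-1$, i.e.\ for $\bigl|x-\tfrac12\bigr|\le y$, set $\tau=z-\tfrac12$. Since $B=\frac{1+2|m|}{2}$ is a half-integer, Lemma~\ref{hAB-1lemma} gives $h_{\frac12,\frac{1+2|m|}{2}}(q)=\frac{\log2}{4\pi i\tau}+O(1)$, while $q^2=e^{2\pi i(2\tau)}$ lies near $1$, so Lemma~\ref{hAB1lemma} with $z\mapsto 2\tau$ gives $h_{\frac12,\frac{1+2|m|}{2}}(q^2)=\frac{\log2}{4\pi i\tau}+O(1)$; the two singular parts cancel, and \eqref{prodesti} leaves $S_{C_5,m}(e^{2\pi i\tau})=O\!\left(y^{1/2}e^{\frac{\pi}{24}\mathrm{Im}(-1/\tau)}\right)$. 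Away from the poles, i.e.\ for $y\le|x|\le\tfrac12-y$, the proof of Lemma~\ref{haway} uses only $|1-q^n|\ge 1-|q|$ and a Gaussian-sum bound, so it applies equally to $h_{A,B}(q^2)$ and gives $|h_{A,B}(q^2)|\ll y^{-3/2}$; together with \eqref{prodaway} this bounds $S_{C_5,m}(q)$ by the same exponentially subdominant quantity, $\ll n^{1/2}\exp\bigl(\tfrac{\pi\sqrt n}{2\sqrt3}-c\sqrt n\bigr)$ with $c>0$, as in the $C_1$ case.

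With the three estimates in hand, the circle-method computation is word for word the one in the proof of Theorem~\ref{main1thm}: writing $N_{C_5}(m,n)=\frac{1}{2\pi i}\int_{\mathcal C}S_{C_5,m}(q)q^{-n-1}\,dq$ with $\mathcal C=\{|q|=e^{-\pi/(2\sqrt{3n})}\}$ and splitting it as $\mathcal I_1+\mathcal I_2+\mathcal I_3$, the arc $\mathcal I_1$ near $q=1$ produces---via the substitution $v=1-i4\sqrt{3n}\,x$, Wright's function $P_{-1/2}$, and the Bessel asymptotics $I_\ell(x)=\frac{e^x}{\sqrt{2\pi x}}+O(e^x x^{-3/2})$---the main term $\frac{\log2}{4\pi}n^{-1/2}e^{\pi\sqrt n/\sqrt3}$, while $\mathcal I_2\ll n^{1/2}\exp\bigl(\tfrac{\pi\sqrt n}{\sqrt3}-c\sqrt n\bigr)$ and $\mathcal I_3\ll n^{-3/4}e^{\pi\sqrt n/\sqrt3}$ are absorbed into the error term. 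I do not expect a genuine obstacle here; the only thing that really has to be verified is the bookkeeping singled out above---that the subtracted $h_{\frac12,\frac{1+2|m|}{2}}(q^2)$ halves the $q=1$ singularity (leaving coefficient $\log2/(4\pi i)$ rather than $\log2/(2\pi i)$) and fully cancels the $q=-1$ singularity---and both facts are immediate from the $A,B$-independence of Lemma~\ref{hAB1lemma} and the $A$-independence of Lemma~\ref{hAB-1lemma}.
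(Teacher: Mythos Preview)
Your proposal is correct and follows exactly the approach the paper intends: the paper itself offers no detailed argument, merely noting the generating-function identity and stating that one proceeds ``in the same way as before.'' You have supplied precisely those details---the cancellation of the $q=\pm1$ singularities via Lemmas~\ref{hAB1lemma} and~\ref{hAB-1lemma}, the minor-arc bound via Lemma~\ref{haway}, and the identical circle-method computation---so your write-up is a faithful (and more explicit) execution of the paper's own proof.
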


\section{Proof of Theorem \ref{main2thm}}

In this section, we study crank differences and prove their sign pattern.  From \eqref{genSC1}, one easily sees that for a nonnegative integer $m$
\begin{equation}\label{SC1diff}
\begin{aligned}
SD_{C_1,m} (q) &:= S_{C_1, m} (q) - S_{C_1, m+1} (q) \\
&= \frac{1}{(q^2 ; q^2)_{\infty}} \sum_{n \geq 1} (-1)^{n-1} \( q^{n(n+1)/2 + mn} - q^{n(3n+1) + 2mn} \).
\end{aligned}
\end{equation}

In \cite[Theorem 1.1]{kks}, the asymptotic behavior of $f_{0,a,b}$ is effectively given. It implies    that for $|x| < y$ as $y \to 0^{+}$,
\[
f_{0,a,b}(z)=-\frac12+\frac{b}{8}(-2\pi iz)+O(y^2).
\]
From the above, we easily see that for $|x| < y$ as $y \to 0^{+}$, 
\begin{equation}\label{SD1near1}
SD_{C_1,m} (q) = -\frac{(1+2m)\pi\sqrt{2i}}{4} z^{3/2}e^{\frac{\pi i}{24z}} + O \( y^{5/2}e^{\frac{\pi}{24} \text{Im}\(\frac{-1}{z}\) } \).
\end{equation}

For asymptotic behavior near $q=-1$, we set $\tau=z-\frac12=x-\frac12+iy$ and $Q=e^{2\pi i\tau}=-q$ as before. Then, we obtain that 
\begin{align}
SD_{C_1,m} (q) &=\frac{1}{(Q^2 ; Q^2)_{\infty}}\(-f_{0,4,2(2m+1)}(\tau)+(-1)^mQ^{-m}f_{0,4,2(2m-1)}(\tau)+f_{0,6,2(2m+1)}(\tau)\)\nonumber\\
&=\(\sqrt{-2i\tau}e^{\frac{\pi i}{24\tau}}+O\(y^{3/2}e^{\frac{\pi}{24}\text{Im}\(\frac{-1}{\tau}\)}\)\)\(-\frac{(-1)^m}{2}+\frac{(-1)^m}{2}\pi i\tau+O\(y^2\)\)\nonumber\\
&=\frac{(-1)^{m+1}}{2} \sqrt{-2i\tau} e^{\frac{\pi i}{24\tau}} + O \( y^{3/2}e^{\frac{\pi}{24} \text{Im}\(\frac{-1}{\tau}\) } \), \label{SD1near-1}
\end{align}
as $y \to 0^{+}$ for $|x-1/2| \leq y$. The rest of proof is almost identical to that of Theorem \ref{main1thm} except that $\mathcal{I}_3$ contributes the main term this time. Since the other estimates are similar, we only give a brief explanation for the main term. In this case, $\mathcal{I}_3$ becomes 
$$
\mathcal{I}_3=\int_{\left|x-\frac12\right|\le y}SD_{C_1, m} \(e^{2\pi iz}\)e^{-2\pi inx+\frac{\pi\sqrt{3n}}{6}} dx
=(-1)^n\int_{|\widetilde{x}|\le y}SD_{C_1, m} \(e^{2\pi i\tau}\)e^{-2\pi in\widetilde{x}+\frac{\pi\sqrt{3n}}{6}} d\widetilde{x}.
$$
As before, by setting $v= 1- i 4\sqrt{3n}\widetilde{x}$ and the equation \eqref{SD1near-1}, we find that
\[
\mathcal{I}_{3} \sim  \frac{(-1)^{n+m+1} \pi (3n)^{-3/4}}{4\sqrt{2}} P_{\frac12} \( \frac{\pi \sqrt{n}}{2\sqrt{3}} \) 
 \sim \frac{(-1)^{n+m+1} }{8\sqrt{3} n} e^{\frac{\pi \sqrt{n}}{\sqrt{3}}} ,
\]
which completes the proof of Theorem \ref{main2thm}.

Again, we can also think about the differences for $N_{C_5}(m,n)$. From \eqref{genSC2} it follows that
\[
\begin{aligned}
SD_{C_5,m} :&= S_{C_5, m} (q) - S_{C_5, m+1} (q) \\
&= \frac{1}{(q^2 ; q^2)_{\infty}} \sum_{n \geq 1} (-1)^{n-1} \( q^{n(n+1)/2 + |m|n} - q^{n(n+1) + 2|m|n} \).
\end{aligned}
\]
By proceeding the same way, we can also conclude that for a fixed nonnegative integer $m$,
\[
(-1)^{n+m+1} (N_{C_5} (m,n) - N_{C_5} (m+1,n) ) > 0
\]
holds for large enough integers $n$.

\section{Modularity and congruences}

In this section, we show that the generating function for $\operatorname{spt}_{\omega} (n)$ is a part of the holomorphic part of a certain weight $3/2$ harmonic weak Maass form to prove the Theorem \ref{main3thm}.

We first note that  from \cite[Lemma 6.1]{ADY} 
\begin{align*}
S_\omega(z):=\sum_{n\ge1}\s(n)q^n&=\sum_{n\ge1}\frac{q^n}{\left(1-q^n\right)^2\left(q^{n+1};q\right)_n\left(q^{2n+2};q^2\right)_\infty}\\
&=\frac{1}{(q^2;q^2)_\infty}\sum_{n\ge1}\frac{nq^n}{1-q^n}+\frac{1}{\left(q^2;q^2\right)_\infty}\sum_{n\ge1}\frac{(-1)^n\left(1+q^{2n}\right)q^{n(3n+1)}}{\left(1-q^{2n}\right)^2}.
\end{align*}
We also note that 
$$
E_2(z):=1-24\sum_{n=1}^\infty\sigma_1(n)q^n=1-24\sum_{n\ge1}\frac{nq^n}{1-q^n},
$$
as usual the Eisenstein series with $\sigma_1(n):=\sum_{d\mid n}d$, and
$$
R_2(z):=\sum_{n\ge0}\frac12N_2(n)q^n=\frac{-1}{(q;q)_\infty}\sum_{n\ge1}\frac{(-1)^n(1+q^n)q^{\frac{n(3n+1)}{2}}}{(1-q^n)^2},\quad\text{\cite[(3.4)]{GA}},
$$
with $N_j(n):=\sum_{m\in\Z}m^jN(m,n)$ the moments of the rank. By using these, we rewrite $S_\omega(z)$ as follows:
$$
S_\omega(z)=\frac{q^{\frac{1}{12}}\left(1-E_2(z)\right)}{24\eta(2z)}-R_2(2z),
$$
where $\eta(z):=q^{\frac{1}{24}}\prod_{n=1}^\infty(1-q^n)$ is Dedekind's $\eta$-function.
To state more define (we have corrected the sign for $\M(z)$ in \cite{KB} (c.f. \cite{fo}).)
\begin{align*}
\N(z)&:=\frac{i}{4\sqrt{2}\pi}\int_{-\overline{z}}^{i\infty}\frac{\eta(24z)}{(-i(\tau+z))^{\frac32}}d\tau,\\
\M(z)&:=q^{-1}S_\omega(12z)-\frac{E_2(24z)}{24\eta(24z)}-\N(z).
\end{align*}
\begin{lemma}
The function $\M(z)$ is a harmonic weak Maass form of weight $3/2$ on $\Gamma_0(576)$ with Nebentypus character $\chi_{12}(\cdot):=\left(\frac{12}{\cdot}\right)$. 
\end{lemma}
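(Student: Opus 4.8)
The plan is to assemble $\M(z)$ from pieces whose modular transformation behavior is already understood in the literature, and to check that the various non-modular contributions cancel. Recall that $\M(z) = q^{-1}S_\omega(12z) - \tfrac{E_2(24z)}{24\eta(24z)} - \N(z)$, and that we have the identity $S_\omega(z) = \tfrac{q^{1/12}(1-E_2(z))}{24\eta(2z)} - R_2(2z)$. Substituting $z \mapsto 12z$ gives $q^{-1}S_\omega(12z) = \tfrac{1-E_2(24z)}{24\eta(24z)} - q^{-1}R_2(24z)$, so that
\[
\M(z) = \frac{1}{24\eta(24z)} - q^{-1}R_2(24z) - \N(z).
\]
The term $\tfrac{1}{24\eta(24z)}$ is $\tfrac{1}{24}$ times a weight $-1/2$ modular form (a reciprocal eta-quotient) on $\Gamma_0(576)$; one reads off its multiplier system and checks it pairs with the character $\chi_{12}$ after accounting for the weight. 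So everything reduces to the pair $\bigl(q^{-1}R_2(24z),\ \N(z)\bigr)$.

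The key input is the mock-modularity of $R_2$. By work of Bringmann (and Bringmann–Ono, Zagier's ``Eisenstein series and the Ramanujan identity'' circle of ideas; the paper cites \cite{KB}), the rank moment generating function $q^{-1}R_2(24z)$ is the holomorphic part of a harmonic weak Maass form of weight $3/2$ whose shadow is a multiple of $\eta(24z)$ — more precisely, its non-holomorphic completion is obtained by adding exactly a period integral of $\eta(24z)$ of the shape $\int_{-\bar z}^{i\infty}\eta(24\tau)(-i(\tau+z))^{-3/2}\,d\tau$ up to the normalizing constant $\tfrac{i}{4\sqrt2\,\pi}$. That completion is $q^{-1}R_2(24z) + \N(z)$ by the very definition of $\N(z)$. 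Hence $\M(z) = \tfrac{1}{24\eta(24z)} - \bigl(q^{-1}R_2(24z) + \N(z)\bigr)$ is a difference of a weight $3/2$ modular form and a weight $3/2$ harmonic weak Maass form, both on $\Gamma_0(576)$ with Nebentypus $\chi_{12}$; being harmonic (annihilated by the weight $3/2$ hyperbolic Laplacian) and of moderate growth at the cusps is preserved under this subtraction, giving the claim.

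The steps, in order: (1) do the substitution above to reduce $\M(z)$ to $\tfrac{1}{24\eta(24z)} - q^{-1}R_2(24z) - \N(z)$, carefully tracking the $q^{-1}$ and the scaling $z\mapsto 12z$ versus $24z$ inside $\eta$ and $E_2$; (2) identify $\tfrac{1}{24\eta(24z)}$ as a weakly holomorphic modular form of weight $-1/2$ on $\Gamma_0(576)$ and pin down that its multiplier, combined with the weight discrepancy, is $\chi_{12}$; (3) quote the harmonic Maass form structure of the completed rank-moment object $q^{-1}R_2(24z)+\N(z)$ from \cite{KB}, verifying the level is $576$, the weight is $3/2$, and the character is $\chi_{12}$, and that the non-holomorphic Eichler integral appearing there is literally our $\N(z)$ (this is where the corrected sign flagged in the excerpt matters); (4) conclude by linearity that $\M(z)$ is harmonic weak Maass of weight $3/2$ on $\Gamma_0(576)$ with Nebentypus $\chi_{12}$, checking growth at all cusps.

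The main obstacle is step (3) together with the bookkeeping in step (2): making sure that the level, weight, multiplier, and sign conventions from the cited rank-moment result align exactly with the normalizations here — in particular that the argument scaling ($24z$), the constant $\tfrac{i}{4\sqrt2\,\pi}$, and the branch of $(-i(\tau+z))^{3/2}$ in $\N(z)$ are the ones for which $q^{-1}R_2(24z)+\N(z)$ transforms on $\Gamma_0(576)$ with character $\chi_{12}$, and that the eta-quotient $\tfrac{1}{24\eta(24z)}$ carries a compatible multiplier so the difference has a genuine (not just projective) Nebentypus. Once the conventions are matched, harmonicity and the weight/level/character statements are immediate from the two known pieces.
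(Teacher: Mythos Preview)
Your strategy---reduce everything to Bringmann's harmonic Maass form result for the completed rank-moment object, together with standard facts about $E_2$ and $\eta$---is exactly the paper's approach (the paper's entire proof is one sentence citing \cite{KB} plus ``basic properties of $E_2(z)$ and $\eta(z)$''). But your execution contains two genuine errors.

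First, the substitution is wrong. Under $z\mapsto 12z$ the factor $E_2(z)$ in the identity
\[
S_\omega(z)=\frac{q^{1/12}(1-E_2(z))}{24\,\eta(2z)}-R_2(2z)
\]
becomes $E_2(12z)$, not $E_2(24z)$. Consequently the $E_2$-term does \emph{not} cancel against the $-\tfrac{E_2(24z)}{24\eta(24z)}$ in the definition of $\M(z)$, and your simplified expression $\M(z)=\tfrac{1}{24\eta(24z)}-q^{-1}R_2(24z)-\N(z)$ is incorrect; quasi-modular pieces of the shape $E_2(\cdot)/\eta(24z)$ remain and must be dealt with (this is presumably why the paper explicitly invokes ``basic properties of $E_2$'').

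Second, and more seriously, your weight bookkeeping is self-contradictory. You correctly state that $\tfrac{1}{\eta(24z)}$ is a weakly holomorphic form of weight $-1/2$, and then two sentences later describe $\M(z)$ as ``a difference of a weight $3/2$ modular form and a weight $3/2$ harmonic weak Maass form.'' A weight $-1/2$ object and a weight $3/2$ object cannot be added to produce something transforming in weight $3/2$; the phrase ``combined with the weight discrepancy'' in your step~(2) does not name any actual mechanism that repairs this---multiplier systems and Nebentypus characters never compensate for a weight mismatch. As written, your decomposition places $\tfrac{1}{24\eta(24z)}$ on the wrong side of the ledger, and the argument does not close. The correct packaging (implicit in the appeal to \cite{KB}) keeps all the weight-$3/2$ data---the completed $R_2$ piece, the non-holomorphic integral $\N(z)$, and the $E_2/\eta$ contributions handled via the quasi-modularity of $E_2$---together, rather than splitting off a stray weight $-1/2$ term.
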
}
It is an immediate result from  \cite[Theorem 1.1]{KB} and the basic properties of $E_2 (z)$ and $\eta (z)$.
Here  $\N(z)$ is the non-holomorphic part and supported on finitely many square classes. Hence, the holomorphic part
$
q^{-1}S_\omega(12z)-\frac{E_2(24z)}{24\eta(24z)}
$
is a mock modular form of weight $3/2$ and becomes an weakly holomorphic modular form with appropriate arithmetic progressions which make  $\N(z)$ vanish. Theorem \ref{main3thm} follows from \cite[Theorem 1.1]{CON} together with the fact that $E_2(z)$ is a $p$-adic modular form for any prime $p$.

\end{document}